\newtheorem{theorem}{Theorem}
\newtheorem{definition}{Definition}
\newtheorem{example}{Example}
\newtheorem{lemma}{Lemma}
\newtheorem{remark}{Remark}
\newtheorem{problem}{Problem}
\newcommand{\sign}[1]{\mbox{sign}(#1)}
\newcommand{\sgn}[1]{\lfloor#1\rceil}
\newcommand{\real}[1]{\mbox{Re}(#1)}
\renewcommand{\exp}[1]{\mbox{exp}(#1)}
\def\Red#1{\textcolor{red}{#1}}
\journal{ArXiv}
\begin{document}

\begin{frontmatter}



\title{A predefined-time first-order exact differentiator based on time-varying gains
\footnote{\Red{This is the preprint version of the accepted manuscript: Aldana-López R., Gómez-Gutiérrez D., Trujillo M.A., Navarro-Gutiérrez M.,
Ruiz-León J., Becerra H. M. A predefined-time first-order exact differentiator based on time-varying gains. Int J
Robust Nonlinear Control. 2021; 1–13. DOI: 10.1002/rnc.5536. 
\textbf{Please cite the publisher's version}. For the publisher's version and full citation details see:
\url{https://doi.org/10.1002/rnc.5536}. This article may be used for non-commercial purposes in accordance with Wiley Terms and Conditions for Use of Self-Archived Versions. 
}}
}

\author[label0]{R. Aldana-López}
\author[label1,label2]{D. Gómez-Gutiérrez}
\ead{David.Gomez.G@ieee.org}
\author[label3]{M. A. Trujillo}
\author[label2]{M. Navarro-Gutiérrez} 
\author[label3]{J. Ruiz-León}
\author[label4]{H. M. Becerra}

\address[label0]{Departamento de Informatica e Ingenieria de Sistemas (DIIS), University of Zaragoza, María de Luna, s/n, 50018, Zaragoza, Spain.}
\address[label1]{Multi-agent autonomous systems lab, Intel Labs, Intel Tecnología de M\'exico, Av. del Bosque 1001, Colonia El Bajío, Zapopan, 45019, Jalisco, M\'exico.}
\address[label2]{Tecnologico de Monterrey, Escuela de Ingenier\'ia y Ciencias, Av. General Ram\'on Corona 2514, Zapopan, 45201, Jalisco, M\'exico.}
\address[label3]{CINVESTAV, Unidad Guadalajara, Av. del Bosque 1145, Colonia el Bajío, Zapopan , 45019, Jalisco, Mexico.}
\address[label4]{Computer Science Dept., Centro de Investigaci\'{o}n en Matem\'{a}ticas (CIMAT), Jalisco S/N, Col. Valenciana, 36023, Guanajuato, Mexico}

\begin{abstract}
Recently, a first-order differentiator based on time-varying gains was introduced in the literature, in its non recursive form,  for a class of differentiable signals $y(t)$, satisfying $|\ddot{y}(t)|\leq L(t-t_0)$, for a known function $L(t-t_0)$, such that $\frac{1}{L(t-t_0)}\left|\frac{d {L}(t-t_0)}{dt}\right|\leq M$ with a known constant $M$. It has been shown that such differentiator is globally finite-time convergent. In this paper, we redesign such an algorithm, using time base generators (a class of time-varying gains), to obtain a differentiator algorithm for the same class of signals, with guaranteed convergence before a desired time, i.e., with fixed-time convergence with an a priori user-defined upper bound for the settling time. Thus, our approach can be applied for scenarios under time-constraints.

We present numerical examples exposing the contribution with respect to state-of-the-art algorithms.
\end{abstract}

\begin{keyword}
Predefined-time stabilization, fixed-time control, Predefined-time control, Prescribed-time control.
\end{keyword}

\end{frontmatter}

\section{Introduction}

The exact differentiator problem is a relevant problem in control theory, that has recently received a great deal of attention~\cite{Levant2003,Moreno2012,Pico2013,Levant2019,Levant2018GloballyGains,Sanchez2016construction,Cruz-Zavala2011,Carvajal-Rubio2019OnDifferentiators}, as it allows to obtain in a finite-time the derivative of a measurable signal and can be applied, among other problems, to the unknown input observer problem~\cite{Fridman2008Higher-orderSystems,Angulo2014OnDifferentiators,Nehaoua2014AnDynamics}; fault detection and isolation~\cite{Rios2015FaultApproach,Rios2012FaultMultiple-observer,Kommuri2016AVehicles}; active disturbance rejection~\cite{FerreiraDeLoza2015OutputIdentification}; and it is an essential part in the universal controller for single-input-single-output systems~\cite{Levant2001HigherDifferentiation,Angulo2009OnHOSMs,Levant2001UniversalConvergence,ShihongDing2015NewControllers}.

For the case where the $n$-th derivative of the input signal is Lipschitz with a known Lipschitz constant, an arbitrary order exact differentiator algorithm has been proposed~\cite{Levant2003}. Lyapunov functions for such algorithm were proposed for the first-order differentiator~\cite{Moreno2012}, for the second-order differentiator~\cite{Sanchez2016construction} and for the arbitrary order case~\cite{Cruz2018}.

For the case where the input signal $y(t)$ is an $n$ times differentiable signal, satisfying $|\frac{d^{n+1}y(t)}{dt^{n+1}}|\leq L(t-t_0)$, for a known function $L(t-t_0)$, such that $\frac{1}{L(t-t_0)}\left|\frac{d {L}(t-t_0)}{dt}\right|\leq M$ with a known constant $M$, Levant and Livne~\cite{Levant2018GloballyGains} introduced an arbitrary order differentiator using $L(t-t_0)$ as a time-varying gain. A Lyapunov function for such algorithm has been proposed~\cite{Moreno2017LevantsGain}. It can be concluded that such algorithms are finite-time convergent according to existing results~\cite{Moreno2017LevantsGain,Cruz2018}. 

To apply differentiator algorithms for scenarios with time constraints, i.e., with guaranteed convergence before a user defined-time, there has been some effort to design differentiator algorithms with uniform convergence independent of the initial condition, i.e., with fixed-time convergence~\cite{Aldana-Lopez2018,Sanchez-Torres2018,Jimenez2019,aldana2019design,Sanchez-Torres2020ASystems,DiegoSanchez-Torres2012AnOperators}, where there exists an upper bound for the settling-time (\textit{UBST}) function that is independent of the initial condition~\cite{Angulo2013RobustDifferentiator,Fraguela2012DesignNoise,Cruz-Zavala2011,Efimov2017SwitchedConvergence}. Of greater interest is when such \textit{UBST} is known since the desired convergence time can be set a priori (predefined) by the user. For the case, where $L(t-t_0)$ is constant, first-order algorithms with predefined convergence have been proposed~\cite{Cruz-Zavala2011,Fraguela2012DesignNoise,Seeber2020ExactBound}. However, the resulting predefined \textit{UBST} is conservative (see e.g. \cite[Section~5]{Cruz-Zavala2011} where the estimate of the \textit{UBST} is approx. 217s, but the simulated one is approx. 2s). However, to our best knowledge, no predefined-time algorithm exists for the case where
$L(t-t_0)$ is time-varying satisfying $\frac{1}{L(t-t_0)}\left|\frac{d {L}(t-t_0)}{dt}\right|\leq M$ with a known constant $M$.

There are different scenarios where real-time constraints need to be satisfied and fixed-time convergence is an important property for those, for instance: In missile guidance~\cite{Zarchan2012}, stabilization in a desired time is required by the impact time control guidance laws~\cite{Song2017}. In fault detection, isolation, and recovery schemes~\cite{Tabatabaeipour2014CalculationAnalysis}, an unrecoverable mode may be reached if failing to recover from the fault on time. In hybrid dynamical systems, it is a common need that the observer (resp. controller) stabilizes the observation error (resp. tracking error) before the next switching occurs~\cite{defoort2011,Gomez2015}. In the frequency control of an interconnected power network, besides the frequency deviation, it is also of interest to know how long the frequency stays out of the bounds~\cite{Mishra2018}. Similarly, for chaos suppression in power systems, the convergence time is an essential performance specification~\cite{Ni2016}, since oscillations are acceptable if they can be damped within a limited time.

In this work, we propose redesigning the first-order differentiator, in its non-recursive form, proposed by Levant and Livne~\cite{Levant2018GloballyGains}. To this aim, we use a class of Time Base Generators (\textit{TBG})~\cite{Morasso1997ANetworks,aldana2019design}. However, contrary to the referred work~\cite{Levant2018GloballyGains}, in our approach we obtain guaranteed convergence at a desired time predefined by the user, i.e., fixed-time convergence, with a predefined \textit{UBST}.

The contribution with respect to other autonomous predefined-time first-order differentiators, such as~\cite{Cruz-Zavala2011,Seeber2020ExactBound} is two-fold. First, the class of signals that we can differentiate is wider. Second, whereas in such autonomous algorithms the desired \textit{UBST} is very conservative, in our approach the slack of the \textit{UBST} is significantly reduced. This results in a convergence where the maximum value of the differentiation error signals is significantly lower, as it is illustrated by numerical examples.

To our best knowledge, the closest work to our approach is~\cite{Holloway2019}, as the same class of time-varying gains is used. However, the results in the referred work~\cite{Holloway2019} can only be applied to the first order differentiator problem for signals with zero second derivative. Moreover, in that work~\cite{Holloway2019}, in every nonzero trajectory, the time-varying gain tends to infinity as the zero error is reached. In our approach, for all finite initial conditions, the error is reached before the singularity in the \textit{TBG} gain occurs.

The rest of the manuscript is organized as follows. In Section~\ref{Sec:Prelim}, we recall the first-order differentiator, in its non-recursive form, that will be redesigned~\cite{Levant2018GloballyGains}, and present basic concepts on fixed-time stability and time-scale transformations. In Section~\ref{Sec:Main}, we present the problem formulation and the proposed predefined-time exact differentiator algorithm. In Section~\ref{Sec:Sim}, we show numerical examples to illustrate our approach; exposing the main advantages with respect to the state-of-the art. Finally, in Section~\ref{Sec:Conclu} we present some concluding remarks and suggest some proposed future work.
\vspace{0.3cm}

\noindent \textbf{Notation:}
$\mathbb{R}$ is the set of real numbers, $\mathbb{R}_+=\{x\in\mathbb{R}\,:\,x\geq0\}$. For $x\in\mathbb{R}$, $\sgn{x}^\alpha = |x|^\alpha \mbox{sign}(x)$, if $\alpha\neq0$ and $\sgn{x}^\alpha = \mbox{sign}(x)$ if $\alpha=0$. 
For a function $\phi:\mathcal{I}\to\mathcal{J}$, its reciprocal $\phi(\tau)^{-1}$, $\tau\in\mathcal{I}$,  is such that $\phi(\tau)^{-1}\phi(\tau)=1$ and its inverse function $\phi^{-1}(t)$, $t\in\mathcal{J}$, is such that $\phi(\phi^{-1}(t))=t$. For functions $\phi,\psi:\mathbb{R}\to\mathbb{R}$, $\phi\circ\psi(t)$ denotes the composition $\phi(\psi(t))$. Given a matrix $A\in\mathbb{R}^n\times\mathbb{R}^m$, $A^T$ represents the matrix transpose of $A$.  

\section{Problem statement and preliminaries}
\label{Sec:Prelim}

\subsection{Problem statement: Predefined-time first-order exact differentiator}

\begin{problem}[The predefined-time exact first order differentiator problem]
\label{Problem}
Considering a user-defined time $T_c$ and a differentiable signal $y(t)\in \mathbb{R}$ such that $\dot{y}(t)$ is Lipschitz and $\left|\ddot{y}(t)\right|\leq L(t-t_0)$, for all $t\geq t_0$, with $L(t-t_0)$ satisfying $\frac{1}{L(t-t_0)}\left|\frac{d{L}(t-t_0)}{dt}\right|\leq M$ for a known constant $M$, the problem consists in accurately obtaining the functions $y(t)$ and $\dot{y}(t)$, for all time $t\geq t_0+T_c$. The set of admissible signals $y(t)$ is denoted as $\mathcal{Y}$.
\end{problem}
Solving this problem enables the application to control problems with time constraints. To solve it, we propose, for an a priori given $T_c>0$ (a desired convergence time), to design functions $h_i(w,t;T_c)$, $i=1,2$ such that, with the algorithm:
\begin{align}
w=&z_0-y(t)\\
\dot{z}_{0}=&-h_1(w,t;T_c)+z_1, \\
\dot{z}_1=&-h_2(w,t;T_c),   \label{Eq:Diff} 
\end{align}
we obtain that, for all $t\geq t_0+T_c$ and every initial condition $z_0(t_0)$ and $z_1(t_0)$, $z_i=\frac{d^iy(t)}{dt^i}$ for $i=0,1$.  

The solutions of~\eqref{Eq:Diff} are understood in the sense of Filippov~\cite{Cortes2008DiscontinuousSystems}.

\begin{remark}
Notice that with $h_i(w,t;T_c)$, $i=1,2$ independent $T_c$, the algorithm~\eqref{Eq:Diff} has the structure of the differentiator in~\cite{Levant2018GloballyGains}, in its non-recursive form. Here our aim is to guarantee exact convergence, before a user-defined time given by $T_c$, and regardless of the initial condition. A feature not present in the base approach~\cite{Levant2018GloballyGains}.
\end{remark}

\subsection{Fixed-time stability}

To analyze the convergence of the differentiators, we analyze the stability of the differentiation error dynamics given by
\begin{align}
\dot{e}_{1}=&-h_1(e_1,t;T_c)+e_2, \\
\dot{e}_2=&-h_2(e_1,t;T_c)-\ddot{y}(t),   \label{Eq:DiffErr} 
\end{align}
where $h_i:\mathbb{R}^n\times\mathbb{R}_+\to\mathbb{R}^n$, $i=1,2$, is some function that is continuous on $x$ (except, perhaps, at the origin), and continuous almost everywhere on $t$ and $|\ddot{y}(t)|\leq L(t-t_0)$.

We assume that $h_i(w,t;T_c)$, $i=1,2$ are such that the origin of~\eqref{Eq:DiffErr} is asymptotically stable and, perhaps except at sets of measure zero, \eqref{Eq:DiffErr} has the properties of existence and uniqueness of solutions in forward-time on the interval $[t_0,+\infty)$~\cite[Proposition~5]{Cortes2008DiscontinuousSystems}. 

The set of admissible $y(t)$ functions, on the interval $[t_0,\mathbf{t}]$ with $t_0<\mathbf{t}$ is denoted by $\mathcal{Y}_{[t_0,\mathbf{t}]}$.
The solution of \eqref{Eq:DiffErr} for $t\in [t_0,\mathbf{t}]$, with signal $y_{[t_0,\mathbf{t}]}$ (i.e. the restriction of the map $y(t)$ to $[t_0,\mathbf{t}]$) and initial condition $e_0$ is denoted by $e(t;e_0,t_0,y_{[t_0,\mathbf{t}]})$, and the initial state is given by $e(t_0;x_0,t_0,\cdot) = e_0$.

We assume that the origin is the unique equilibrium point of~\eqref{Eq:DiffErr}.  Note that because $h_i(w,t;T_c)$, $i=1,2$ may be discontinuous at a set of measure zero, system \eqref{Eq:DiffErr} can have an equilibrium point at the origin despite the presence of disturbances.

For the system in Eq.~\eqref{Eq:DiffErr}, its \textit{settling-time function} $T(e_0, t_0)$ for the initial state $e_0 \in \mathbb R^n$ and the initial time $t_0 \geq 0$ is defined as:
\begin{equation}
T(e_0,t_0)=\inf\left\{\xi\geq t_0:\forall y_{[t_0,\infty)}\in\mathcal{Y}_{[t_0,\infty)}, \lim_{t\to\xi}e(t;e_0,t_0,y_{[t_0,\infty)})=0\right\}-t_0.
\end{equation}

For simplicity, in the rest of this paper we write ``stable" instead of ``the origin is globally stable". With this shorthand, we can introduce the notion of fixed-time stability.

\begin{definition}(Finite-time stability) 
\label{def:finite}
System \eqref{Eq:DiffErr} is  \textit{finite-time stable} if it is asymptotically stable~\cite{Khalil2002NonlinearSystems} and for every initial state $e_0 \in \mathbb R^n$ and the initial time $t_0 \geq 0$, the settling-time function $T(e_0,t_0)$ is finite.
\end{definition}

\begin{definition} (Fixed-time stability)
\label{def:fixed}
System \eqref{Eq:DiffErr} is  \textit{fixed-time stable} if it is asymptotically stable~\cite{Khalil2002NonlinearSystems} and the settling-time function $T(e_0,t_0)$ is bounded on  $\mathbb{R}^n\times\mathbb{R}_+$, i.e. there exists $T_{{\max}}<+\infty$ such that $T(e_0,t_0)\leq T_{\max}$, for all $t_0\in\mathbb{R}_+$ and $e_0\in\mathbb{R}^n$. The quantity $T_{\max}$ is called an Upper Bound of the Settling Time (\textit{UBST}) of the system~\eqref{Eq:DiffErr}.
\end{definition}

\subsection{Levant's first-order differentiator with time-varying gains}

\begin{theorem}[\cite{Levant2018GloballyGains}]
\label{Th:Levant}
Given a differentiable signal $y(t)\in \mathbb{R}$ such that $\dot{y}(t)$ is Lipschitz and $\left|\ddot{y}(t)\right|\leq L(t-t_0)$, for all $t\geq t_0$, with $L(t-t_0)$ satisfying $\frac{1}{L(t-t_0)}\left|\frac{d {L}(t-t_0)}{dt}\right|\leq M$ for a known constant $M$. 
The algorithm:
\begin{align}
w=&z_0-y(t)\\
\dot{z}_{0}=&-\phi_1(w;M,L(t-t_0))+z_1, \\
\dot{z}_1=&-\phi_2(w;M,L(t-t_0)),   \label{Eq:LevDiff} 
\end{align}
is a first-order exact differentiator, i.e., there exists a finite-time $\hat{t}$, such that $z_0=y(t)$ and $z_1=\dot{y}(t)$ for all time $t\geq t_0+\hat{t}$, where
\begin{equation}
\label{Eq:Levant1}
\phi_1(w;M,L(t-t_0)):=\lambda_1 L(t-t_0)^{\frac{1}{2}}\sgn{w}^{\frac{1}{2}}+\mu_1M w 
\end{equation}
and 
\begin{equation}
\label{Eq:Levant2}
\phi_2(w;M,L(t-t_0)):=\lambda_0L(t-t_0)\sign{w}+\lambda_1\mu_0L(t-t_0)^{\frac{1}{2}}M\sgn{w}^{\frac{1}{2}}+\mu_0\mu_1M^2w,
\end{equation}
where $\mu_i$, $i=0,1$ are such that $s^2+\mu_1s+\mu_0\mu_1$ is a Hurwitz polynomial and  $\lambda_i$, $i=0,1$ are suitable positive constants.

In other words, the system 
\begin{align}
\dot{e}_{1}=&-\phi_1(e_1;M,L(t-t_0))+e_2, \\
\dot{e}_2=&-\phi_2(e_1;M,L(t-t_0))-\ddot{y}(t),   \label{Eq:LevDiffErr} 
\end{align}
where $e_1=z_0-y(t)$ and $e_2=z_1-\dot{y}(t)$ (i.e., system~\eqref{Eq:DiffErr} with $h_1(w,t;T_c)=\phi_1(w;M,L(t-t_0))$ and $h_2(w,t;T_c)=\phi_2(w;M,L(t-t_0))$), is finite-time stable.
\end{theorem}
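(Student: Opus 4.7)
My plan is to reduce the analysis to the well-studied constant-gain case through a rescaling of the error coordinates that neutralizes the time-varying factor $L(t-t_0)$, and then apply a known Lyapunov function for Levant's first-order differentiator. The role of the extra $M$-dependent terms in $\phi_1$ and $\phi_2$ is precisely to compensate for the contributions produced when the rescaling is differentiated in time; this is the mechanism underlying the Levant--Livne construction.

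First, I would introduce a coordinate change of the form $\eta_i = e_i/L(t-t_0)^{\beta_i}$, with the exponents $\beta_i$ chosen according to the natural homogeneity weights of Levant's differentiator, so that the nonlinear terms $\lambda_1 L^{1/2}\sgn{e_1}^{1/2}$ and $\lambda_0 L\sign{e_1}$ become autonomous in the new coordinates $\eta$. Computing $\dot{\eta}_i$ then produces two kinds of contributions: the homogeneous nominal vector field of the constant-$L$ differentiator, and additional terms proportional to $\dot L/L$ (hence pointwise bounded by $M$) multiplied by polynomial expressions in $\eta$. The $M$-corrections in \eqref{Eq:Levant1}--\eqref{Eq:Levant2} have exactly the form of these extra contributions, so they cancel at each of the homogeneity orders, leaving a perturbed constant-gain homogeneous system $\dot\eta = f_0(\eta) + \delta(t)$ with $\|\delta(t)\|$ absorbed by the standard disturbance-rejection design of Levant's algorithm.

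Once the system is in this form, I would invoke the Lyapunov function constructed in~\cite{Moreno2017LevantsGain} (whose first-order specialization is that in~\cite{Moreno2012}) for the nominal system $\dot\eta = f_0(\eta)$, and establish a differential inequality of the form $\dot V \leq -\kappa V^{\alpha}$ with $\alpha\in(0,1)$, by choosing $\lambda_0,\lambda_1$ sufficiently large and exploiting the Hurwitz assumption on $s^2+\mu_1 s+\mu_0\mu_1$ to dominate the linear perturbation terms. Finite-time convergence of $\eta$, and hence of $e$, then follows by the comparison lemma. I expect the main obstacle to be the simultaneous cancellation, at all three homogeneity orders (constant, $\sgn{\cdot}^{1/2}$, and linear), between the differentiated rescaling and the $M$-corrections; in particular, the cross term $\lambda_1\mu_0 L^{1/2}M\sgn{e_1}^{1/2}$ is the least obvious compensation to verify, and is precisely the reason the Hurwitz structure enters through the product $\mu_0\mu_1$ rather than through $\mu_0$ and $\mu_1$ independently.
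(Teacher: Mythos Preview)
The paper does not supply its own proof of this theorem: it is stated as a preliminary result quoted from Levant and Livne~\cite{Levant2018GloballyGains}, with the Lyapunov analysis attributed to~\cite{Moreno2017LevantsGain}. There is therefore nothing in the paper to compare your argument against; the theorem is used as a black box in the proof of the paper's main result (Theorem~\ref{Theorem:FOD}).

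That said, your proposed route---rescale the error by suitable powers of $L(t-t_0)$ so that the homogeneous super-twisting terms become autonomous, observe that the $M$-dependent linear and mixed terms in $\phi_1,\phi_2$ are precisely what is needed to absorb the $\dot L/L$ contributions generated by differentiating the rescaling, and then invoke the Lyapunov function of~\cite{Moreno2012,Moreno2017LevantsGain} for the resulting perturbed constant-gain system---is exactly the mechanism behind the Levant--Livne construction and the proof in~\cite{Moreno2017LevantsGain}. Your identification of the cross term $\lambda_1\mu_0 L^{1/2}M\sgn{e_1}^{1/2}$ as the delicate cancellation, and of the Hurwitz condition on $s^2+\mu_1 s+\mu_0\mu_1$ as governing the linear part after rescaling, is accurate. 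So your sketch is correct and aligned with the cited literature, even though the present paper itself offers no independent argument.
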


For the sake of simplicity, throughout the manuscript we consider $\lambda_0=1.1$, $\lambda_1=1.5$, $\mu_0=2$ and $\mu_1=3$ as suggested in~\cite{Levant2018GloballyGains}.

\begin{remark}
The case where $y(t)$ satisfies $|\ddot{y}(t)|\leq L(t-t_0)$ with constant $L(t-t_0)$ was proposed in~\cite{Castillo2018Super-TwistingPerturbations}, where it was shown, using Lyapunov analysis, that~\eqref{Eq:LevDiffErr} is finite-time stable with a settling time that is an unbounded function of the initial conditions.
\end{remark}

\subsection{Time-scale transformations}

The trajectories corresponding to the system solutions of~\eqref{Eq:DiffErr} are interpreted, in the sense of differential geometry~\cite{Kuhnel2015DifferentialGeometry}, as regular parametrized curves~\cite{Pico2013,aldana2019design}. Since we apply regular parameter transformations over the time variable, then without ambiguity, this reparametrization is sometimes referred to as time-scale transformation.

\begin{definition}(Regular parametrized curve~\cite[Definition~2.1]{Kuhnel2015DifferentialGeometry})
\label{Def:RegularParamCurve}
A regular parametrized curve, with parameter $t$, is a $C^1(\mathcal{I})$ immersion $c: \mathcal{I}\to \mathbb{R}$, defined on a real interval $\mathcal{I} \subseteq \mathbb{R}$. This means that $\frac{dc}{dt}\neq 0$ holds everywhere.
\end{definition}

\begin{definition}(Regular curve~\cite[Pg.~8]{Kuhnel2015DifferentialGeometry})
\label{Def:RegularCurve}
A regular curve is an equivalence class of regular parametrized curves, where the equivalence relation is given by regular (orientation preserving) parameter transformations $\varphi$, where $\varphi:~\mathcal{I}~\to~\mathcal{I}'$ is $C^1(\mathcal{I})$, bijective and $\frac{d\varphi}{dt}>0$. Therefore, if $c:\mathcal{I}\to\mathbb{R}$ is a regular parametrized curve and $\varphi:\mathcal{I}\to \mathcal{I}'$ is a regular parameter transformation, then $c$  and  $c\circ\varphi:\mathcal{I}'\to\mathbb{R}$ are considered to be equivalent.
\end{definition}

\begin{remark}
We will apply time-scale transformations to asymptotically stable systems. Thus, the trajectories of the system are represented by regular parametrized curves with time interval $I$ spanning from the initial condition to the origin. Thus, for $x(t)$ and its reparametrization (time-scaling) $\tilde{x}(\tau)$, to belong to the same equivalence class of regular parametrized curves, it is necessary that $\lim_{t\to\inf I }x(t)=\lim_{\tau\to\inf I'}\tilde{x}(\tau)$ and $\lim_{t\to\sup I }x(t)=\lim_{\tau\to\sup I'}\tilde{x}(\tau)$, where the trajectory $\tilde{x}(\tau)$ is defined on the interval $I'\subseteq \mathbb{R}_+$. 
\end{remark}

\begin{lemma}[\cite{aldana2019design}]
\label{Lemma:ParTrans}
The bijective function $\varphi:[t_0,t_0+T_c)\to[0,+\infty)$ defined by $\tau=\varphi(t):=-\alpha^{-1}\ln(1-(t-t_0)/T_c)$, defines a parameter transformation with $t=\varphi^{-1}(\tau)=T_c(1-\exp{-\alpha\tau})+t_0$ as its inverse mapping.
\end{lemma}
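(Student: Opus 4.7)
The plan is to verify directly the three defining properties of a regular parameter transformation from Definition~\ref{Def:RegularCurve}: that $\varphi$ is of class $C^1$ on $[t_0,t_0+T_c)$, that it is bijective onto $[0,+\infty)$, and that $d\varphi/dt>0$. The explicit formula for the inverse will then drop out by elementary algebraic manipulation.

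First, I would check the boundary behaviour. Substituting $t=t_0$ gives $\varphi(t_0)=-\alpha^{-1}\ln(1)=0$, and as $t\to(t_0+T_c)^-$ the argument $1-(t-t_0)/T_c$ tends to $0^+$, so $\varphi(t)\to+\infty$ (using that $\alpha>0$, which is implicit from the fact that the stated codomain is $[0,+\infty)$). This pins down the endpoints of the image.

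Next, on the open set where $1-(t-t_0)/T_c>0$, i.e., on all of $[t_0,t_0+T_c)$, the logarithm is smooth, so $\varphi\in C^1$ there, and differentiation gives
\begin{equation}
\frac{d\varphi}{dt}=\frac{1}{\alpha T_c\bigl(1-(t-t_0)/T_c\bigr)},
\end{equation}
which is strictly positive on the whole interval. Hence $\varphi$ is strictly increasing; combined with the endpoint values $0$ and $+\infty$ and the intermediate value theorem, this yields the bijection $[t_0,t_0+T_c)\to[0,+\infty)$.

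Finally, to obtain the inverse I would solve $\tau=-\alpha^{-1}\ln(1-(t-t_0)/T_c)$ for $t$: multiplying by $-\alpha$ and exponentiating gives $e^{-\alpha\tau}=1-(t-t_0)/T_c$, whence $t=T_c(1-e^{-\alpha\tau})+t_0$, matching the claimed formula. Composing this expression with $\varphi$ in either order yields the identity, confirming $\varphi^{-1}(\tau)=T_c(1-e^{-\alpha\tau})+t_0$. There is no serious technical obstacle — the argument is essentially bookkeeping — the only subtlety worth flagging is the tacit sign convention $\alpha>0$, without which the codomain would be $(-\infty,0]$ rather than $[0,+\infty)$.
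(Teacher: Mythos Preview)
Your proof is correct. The paper itself does not supply a proof of this lemma: it is quoted as a result from~\cite{aldana2019design} and used as a black box in the proof of Theorem~\ref{Theorem:FOD}. Your direct verification --- checking $C^1$-regularity, positivity of $d\varphi/dt$, the endpoint values, and the explicit inverse --- is exactly the elementary bookkeeping one would expect, and there is nothing to compare against in the present paper. Your observation that $\alpha>0$ is tacitly assumed is apt; the paper makes this explicit only later, in the hypotheses of Theorem~\ref{Theorem:FOD}.
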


\section{Main result}\label{Sec:ProbStatement}
\label{Sec:Main}

Next, we introduce our main result.

\begin{theorem}
\label{Theorem:FOD}
Let 
\begin{itemize}
    \item $\varphi(\tau)$ be chosen as in Lemma \ref{Lemma:ParTrans} with $\alpha>0$,
    \item $\kappa(t-t_0):=\left({\alpha(T_c- (t-t_0))}\right)^{-1}$,
    \item $\mathcal{L}(\tau)$ such that $\mathcal{L}(\varphi(t))=L(t-t_0)\kappa(t-t_0)^{-2}$, and
    \item $\mathcal{M}$ such that $\frac{1}{\mathcal{L}(\tau)}\left|\frac{d \mathcal{L}(\tau)}{d\tau}\right|\leq\mathcal{M}$.
\end{itemize}

Then, using the algorithm~\eqref{Eq:Diff} and selecting $h_i(w,t;T_c)$, $i=1,2$, as:  
\begin{equation}
\label{Eq:H1Th1}
    h_1(w,t;T_c)=\left\lbrace
    \begin{array}{cc}
        \kappa(t)\left( -\alpha w+\phi_1(w;\mathcal{M},\mathcal{L}(\varphi(t))) \right) & \text{for } t\in[0,T_c) \\
        \phi_1(w;M,L(t)) & \text{otherwise,}
    \end{array}
    \right.
\end{equation}

\begin{equation}
\label{Eq:H2Th1}
    h_2(w,t;T_c)=\left\lbrace
    \begin{array}{cc}
        \kappa(t)^2\left( \alpha^2 w +\phi_2(w;\mathcal{M},\mathcal{L}(\varphi(t))-\alpha\phi_1(w;\mathcal{M},\mathcal{L}(\varphi(t)))) \right) & \text{for } t\in[0,T_c) \\
        \phi_2(w;M,L(t)) & \text{otherwise,}
    \end{array}
    \right.
\end{equation}
solves Problem~\ref{Problem} with $T_c$ as the upper bound for the convergence time. Moreover, if $L(t-t_0)$ is such that the settling-time function of~\eqref{Eq:LevDiffErr} is an unbounded function of the initial condition, then, $T_c$ is the least \textit{UBST} of~\eqref{Eq:DiffErr}.
\end{theorem}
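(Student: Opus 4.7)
The plan is to reduce the problem on $[t_0,t_0+T_c)$ to Theorem~\ref{Th:Levant} through a joint time-state change of variables, and then to preserve the zero error after $t_0+T_c$ by an equilibrium argument. Concretely, I would set $\tau=\varphi(t-t_0)$ with $\varphi$ from Lemma~\ref{Lemma:ParTrans}, together with the companion state change $\tilde{e}_1(\tau):=e_1(t)$ and $\tilde{e}_2(\tau):=e_2(t)/\kappa(t-t_0)+\alpha\,e_1(t)$. Using $d\tau/dt=\kappa(t-t_0)$ and the identity $\dot{\kappa}=\alpha\kappa^2$, a direct chain-rule computation should yield
\begin{align}
\frac{d\tilde{e}_1}{d\tau} &= -\phi_1(\tilde{e}_1;\mathcal{M},\mathcal{L}(\tau))+\tilde{e}_2,\\
\frac{d\tilde{e}_2}{d\tau} &= -\phi_2(\tilde{e}_1;\mathcal{M},\mathcal{L}(\tau))-\ddot{y}(t)/\kappa(t-t_0)^2,
\end{align}
with $|\ddot{y}(t)/\kappa(t-t_0)^2|\leq L(t-t_0)/\kappa(t-t_0)^2=\mathcal{L}(\tau)$ by the definition of $\mathcal{L}$. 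The extra $-\alpha w$, $\alpha^2 w$ and $-\alpha\phi_1$ corrections baked into $h_1,h_2$ in \eqref{Eq:H1Th1}--\eqref{Eq:H2Th1} are precisely what cancels the spurious terms produced by differentiating $e_2/\kappa$ and by propagating the $\alpha e_1$ shift through the first equation; verifying this cancellation is the main algebraic step.

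Once the transformed system has been put into Levant's form, Theorem~\ref{Th:Levant} (together with the assumed bound $|d\mathcal{L}/d\tau|/\mathcal{L}\leq\mathcal{M}$) provides a finite $\hat{\tau}\geq 0$ such that $\tilde{e}(\tau)=0$ for all $\tau\geq\hat{\tau}$. Since $\varphi^{-1}:[0,+\infty)\to[t_0,t_0+T_c)$ is strictly increasing and bounded above by $t_0+T_c$, this corresponds to $e(\hat{t})=0$ at $\hat{t}:=\varphi^{-1}(\hat{\tau})+t_0<t_0+T_c$. For $t\in[\hat{t},t_0+T_c)$ the zero trajectory is preserved by uniqueness of Filippov solutions, because the origin is an equilibrium of \eqref{Eq:DiffErr}; and for $t\geq t_0+T_c$ the gains switch to the time-invariant Levant expressions \eqref{Eq:Levant1}--\eqref{Eq:Levant2}, which again admit the origin as an equilibrium, so $e(t)\equiv 0$ thereafter. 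This shows that $T_c$ is a \textit{UBST}.

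For the least-\textit{UBST} claim I would invert the argument: bijectivity of $\varphi$ implies that if the settling-time function of \eqref{Eq:LevDiffErr} is unbounded in the initial condition, then for every $T'<T_c$ some initial state yields a $\tau$-settling time exceeding $\varphi(T')$, which pulls back to an original settling time exceeding $T'$, ruling out any smaller \textit{UBST}. The main obstacle is the algebraic verification in the transformation step --- in particular, identifying the correct state change $\tilde{e}_2=e_2/\kappa+\alpha e_1$ (which is exactly what the correction terms in $h_1,h_2$ are engineered to produce) and matching the perturbation bound to $\mathcal{L}(\tau)$ rather than a coarser one; everything else then follows from Theorem~\ref{Th:Levant} and basic properties of $\varphi$.
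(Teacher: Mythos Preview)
Your proposal is correct and follows essentially the same route as the paper: the paper uses the identical coordinate change $\epsilon_1=e_1$, $\epsilon_2=\alpha e_1+\kappa(t-t_0)^{-1}e_2$ together with the time reparametrization of Lemma~\ref{Lemma:ParTrans} to reduce the error dynamics on $[t_0,t_0+T_c)$ to the Levant form~\eqref{Eq:LevDiffErr} with $\mathcal{L},\mathcal{M}$, then invokes Theorem~\ref{Th:Levant} and pulls the finite $\tau$-settling time back through $\varphi^{-1}$ to obtain the explicit bound $T(e_0,t_0)=T_c(1-\exp{-\alpha\mathcal{T}(\epsilon_0,\tau_0)})<T_c$, with the least-\textit{UBST} conclusion following from the supremum being attained iff $\sup\mathcal{T}=+\infty$. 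The only cosmetic discrepancy is that you call the post-$T_c$ Levant gains ``time-invariant'' while they are in fact time-varying through $L(t-t_0)$; this does not affect the equilibrium/invariance argument you give.
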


\begin{proof}
Let $e_1(t)=z_0-y(t)$ and $e_2=z_1-\dot{y}(t)$. The proof is divided in two parts. First we will show that $e_1(t)=0$ and $e_2(t)=0$ for $t\in [\hat{t},t_0+T_c)$ for some time $\hat{t}$. Afterwards, we show that the condition $e_1(t)=0$ and $e_2(t)=0$ is maintained for all $t>t_0+T_c$.
 
Consider the coordinate change $\epsilon_1=e_1$ and $\epsilon_2=\alpha e_1+\kappa(t-t_0)^{-1}e_2$, where the error dynamics is given in~\eqref{Eq:DiffErr}. Then, the dynamics under the coordinate change for $t\in[t_0,t_0+T_c)$ is
\begin{align}
\dot{\epsilon}_{1}=&\kappa(t-t_0)\left( -\phi_1(\epsilon_1;\mathcal{M},\mathcal{L}(\varphi(t)))+\epsilon_2\right), \\
\dot{\epsilon}_2=&\kappa(t-t_0)(-\phi_2(\epsilon_1;\mathcal{M},\mathcal{L}(\varphi(t)))-\kappa(t-t_0)^{-2}\ddot{y}(t)).   \label{Eq:Epsilon} 
\end{align}
Now, consider the parameter transformation given in Lemma~\ref{Lemma:ParTrans} and notice that $\mathcal{L}(\tau):=L(\psi(\tau))\rho(\tau)^{-2}$, where $\rho(\tau)=\left.\kappa(t-t_0)\right|_{t=\varphi(t)}=(\alpha T_c)^{-1}\exp{\alpha \tau}$ and $L(\psi(\tau))=\left. L(t-t_0)\right|_{t=\varphi(t)}$; and let $\epsilon=[\epsilon_1,\epsilon_2]^T$, then 
$$ \frac{d\epsilon}{d\tau}=\left.  \frac{d\epsilon}{dt}  \frac{dt}{d\tau}\right|_{t=\varphi^{-1}(\tau)}.$$
Since $\frac{dt}{d\tau}=\left. \kappa(t-t_0)^{-1}\right|_{t=\varphi^{-1}(\tau)}$, for $t\in[t_0+T_c)$, then the dynamics of~\eqref{Eq:Epsilon} in the new time $\tau$ is given by
\begin{align}
\frac{d\epsilon_1}{d\tau}=&-\phi_1(\epsilon_1;\mathcal{M},\mathcal{L}(\tau))+\epsilon_2, \\
\frac{d\epsilon_2}{d\tau}=&-\phi_2(\epsilon_1;\mathcal{M},\mathcal{L}(\tau))+\delta(\tau),   \label{Eq:EpsilonTau} 
\end{align}
where $\delta(\tau)=-\rho(\tau)^{-2}\left.[\ddot{y}(t)]\right|_{t=\varphi(t)}$.

Thus, the disturbance $\delta(\tau)$ satisfies $|\delta(\tau)|\leq \mathcal{L}(\tau)$. 
Notice that $\mathcal{L}(\tau)$ is such that 
$$\frac{1}{\mathcal{L}(\tau)}
\left|\frac{d \mathcal{L}(\tau)}{d\tau}\right|\leq\mathcal{M}.$$ 

Thus, according to Theorem~\ref{Th:Levant}, system~\eqref{Eq:EpsilonTau} is finite-time stable and has a settling time function $\mathcal{T}(\epsilon_0,\tau_0)$. Using Lemma~\ref{Lemma:ParTrans}, we can conclude that the settling-time function of~\eqref{Eq:DiffErr} is \begin{equation}
\label{Eq:SeetlingTimeDif}
    T(e_0,t_0)=\lim_{\tau\to\mathcal{T}(\epsilon_0,\tau_0)}(\varphi^{-1}(\tau)-t_0)=T_c(1-\exp{-\alpha\mathcal{T}(\epsilon_0,\tau_0)}).
\end{equation}
Thus, 
\begin{equation}
\label{Eq:STsup}
    \sup_{(e_0,t_0)\in\mathbb{R}^2\times\mathbb{R}}T(e_0,t_0)\leq T_c.
\end{equation}
Then, $e_1(t)=0$ and $e_2(t)=0$ for $t\in [\hat{t},t_0+T_c)$, where $\hat{t}=t_0+T_c(1-\exp{-\alpha\mathcal{T}(\epsilon_0,\tau_0)})$. 
Moreover, it follows from~\eqref{Eq:SeetlingTimeDif}, that the equality in~\eqref{Eq:STsup} holds when $\sup_{(e_0,t_0)\in\mathbb{R}^2\times\mathbb{R}_+}\mathcal{T}(e_0,t)=+\infty$.

The second part of the proof follows trivially from Theorem~\ref{Th:Levant}, because, for all $t\geq t_0+T_c$, the differentiation error dynamics is given by system~\eqref{Eq:LevDiffErr}.
\end{proof}

\begin{remark}
To our best knowledge, the closest work to our approach to provide predefined-time algorithms is~\cite{Holloway2019}. However, notice that the referred method~\cite{Holloway2019} can only be applied to the first-order exact differentiator problem for signals with zero second derivative. 
\end{remark}

\begin{remark}
\label{Rem:BoundedGains}
Similarly as in~\cite{Holloway2019}, the time-varying gain $\kappa(t-t_0)$ tends to infinity as the time approaches the predefined-time. Notice that in \cite[Section~II.A]{Holloway2019}, workarounds are suggested to maintain the time-varying gain bounded in practice. However, with such workarounds, for every initial condition only convergence to a neighbourhood of the origin is obtained. In fact, the size of such neighbourhood tends to infinity as $\|e_0\|\to+\infty$. 

Nonetheless, notice that in our approach, for any finite initial condition, $e_1(t)=0$ and $e_2(t)=0$ are obtained before the singularity in $\kappa(t-t_0)$ occurs. Thus, in practice different methods can be used to obtain predefined-time convergence with bounded gains. A naive approach is to choose a constant $T^*<T_c$ and to switch~\eqref{Eq:H1Th1} and~\eqref{Eq:H2Th1} at $T^*$ instead that at $T_c$. 
Notice that the switching occurs while $\kappa(t-t_0)$ is bounded. Also note that with such workaround, the differentiation error of our algorithm is still finite-time convergent. Moreover, there exists a neighbourhood of initial conditions around the origin, whose settling time is bounded by $T_c$, and the size of such neighbourhood can be set arbitrarily large with a suitable selection of the $\alpha$ and $\mathcal{M}$ parameters. This remark will be illustrated in Example~\ref{Ex:Krstic}.
\end{remark}

\begin{remark}
Compared with existing autonomous algorithms~\cite{Cruz-Zavala2011,Seeber2020ExactBound}, whose predefined \textit{UBST} is conservative (i.e., the slack between the least \textit{UBST} and the predefined one is large\footnote{See e.g., the example in \cite[Section~5]{Cruz-Zavala2011} where the estimate of the \textit{UBST} is approx. $217s$, but the simulated one is approx. $2s$; or the example in~\cite{Seeber2020ExactBound} where the \textit{UBST} is set as $T_c=1$, but in the simulations the convergence is bounded by $0.25s$. Note that no methodology is provided in such works to arbitrarily reduce such slack.}), we show that in our approach such slack can be significantly reduced. A consequence of reducing such slack is that the maximum differentiation error is significantly reduced, as it will be illustrated in Example~\ref{Ex:Sine}. Another advantage with respect to the algorithms proposed in~\cite{Cruz-Zavala2011,Seeber2020ExactBound} is that such algorithms can only be applied to the differentiator problem if the second derivative is bounded by a constant, a restriction that is not present in our approach.
\end{remark}

\section{Simulations and state-of-the-art comparison}
\label{Sec:Sim}

In this section we present numerical simulations to illustrate our methodology. Our first order differentiator algorithm presents guaranteed convergence before the desired time given by $T_c$. For the sake of simplicity, throughout the examples we consider $t_0=0$. The simulations below were created in OpenModelica using the Euler integration method with a step of $2e-4s$.

To illustrate the advantages with respect to the closest algorithm~\cite{Holloway2019}, consider the following example. Recall that the result in the referred work~\cite{Holloway2019} can only be applied to the first-order exact differentiator problem for signals with zero second derivative. Thus, we consider the problem of differentiating a linear function of time.

\begin{example}
\label{Ex:Krstic}
Consider the signal $y(t)=t+1$ which satisfies $\ddot{y}(t)=0$, and set $T_c=1$. For comparison, consider Example~1 in~\cite{Holloway2019}, i.e., algorithm~\eqref{Eq:Diff} with $h_i(w,t;t_c)=g_i(t-t_0,T)x_1$ where
\begin{align}
    g_1(t-t_0,T)=&l_1+2(m+2)\kappa(t-t_0;T_c),\\
    g_2(t-t_0,T)=&l_2+l_1(m+2)\kappa(t-t_0;T_c)+(m+1)(m+2)\kappa(t-t_0;T_c)^2, \label{Eq:Krstic}
\end{align}
where $\alpha=l_1=l_2=m=1$. The convergence of~\eqref{Eq:Krstic} under different initial conditions is shown in the first row of Figure~\ref{fig:Ex1}. Notice, that convergence is obtained exactly at $T_c$. For our algorithm consider $\alpha=1$, $L(t-t_0)=0.1\exp{-(t-t_0)}$ and $\mathcal{M}=6$. Notice that, $|\ddot{y}(t)|\leq L(t-t_0)$,  $\mathcal{L}(\tau)=0.1\alpha^2T_c^2 \exp{-T_c+T_c\exp{-\alpha\tau}-2\alpha \tau}$ and therefore $\frac{1}{\mathcal{L}(\tau)}\left|\frac{d\mathcal{L}(\tau)}{d\tau}\right|=\alpha (T_c \exp{-\alpha\tau}+2)\leq\alpha (T_c +2)\leq \mathcal{M}$. The convergence of our algorithm under different initial conditions is illustrated in the second row of Figure~\ref{fig:Ex1}. Notice that zero differentiation error is obtained before the desired time given by $T_c=1$. To illustrate how to maintain a bounded gain, consider the workaround proposed in Remark~\ref{Rem:BoundedGains}, which is the same described in
Section~II.A of~\cite{Holloway2019}. For illustrative purposes we choose a bound for $\kappa(t-t_0)$ of $10$. For this case, the simulation of the algorithm~\eqref{Eq:Krstic} is shown in the first row of Figure~\ref{fig:Ex1Bounded}, whereas the simulation of our algorithm is shown in the second row of Figure~\ref{fig:Ex1Bounded}. Notice that, for any nonzero initial condition, only convergence to a neighbourhood of the signals is obtained in the prescribed time with the algorithm under comparison~\cite{Holloway2019}, but zero error cannot be obtained. In fact, the size of such neighbourhood tends to infinite as $\|e_0\|\to+\infty$, whereas in our algorithm there is a neighbourhood around the origin of $e_0$, where predefined convergence is still obtained. By selecting the bound for $\kappa(t-t_0)$, the size of such neighbourhood can be made arbitrarily large.

\begin{figure*}
    \centering
    \def\svgwidth{18cm}
\begingroup%
  \makeatletter%
  \providecommand\color[2][]{%
    \errmessage{(Inkscape) Color is used for the text in Inkscape, but the package 'color.sty' is not loaded}%
    \renewcommand\color[2][]{}%
  }%
  \providecommand\transparent[1]{%
    \errmessage{(Inkscape) Transparency is used (non-zero) for the text in Inkscape, but the package 'transparent.sty' is not loaded}%
    \renewcommand\transparent[1]{}%
  }%
  \providecommand\rotatebox[2]{#2}%
  \newcommand*\fsize{\dimexpr\f@size pt\relax}%
  \newcommand*\lineheight[1]{\fontsize{\fsize}{#1\fsize}\selectfont}%
  \ifx\svgwidth\undefined%
    \setlength{\unitlength}{1080bp}%
    \ifx\svgscale\undefined%
      \relax%
    \else%
      \setlength{\unitlength}{\unitlength * \real{\svgscale}}%
    \fi%
  \else%
    \setlength{\unitlength}{\svgwidth}%
  \fi%
  \global\let\svgwidth\undefined%
  \global\let\svgscale\undefined%
  \makeatother%
  \begin{picture}(1,0.46666667)%
    \lineheight{1}%
    \setlength\tabcolsep{0pt}%
    \put(0,0){\includegraphics[width=\unitlength,page=1]{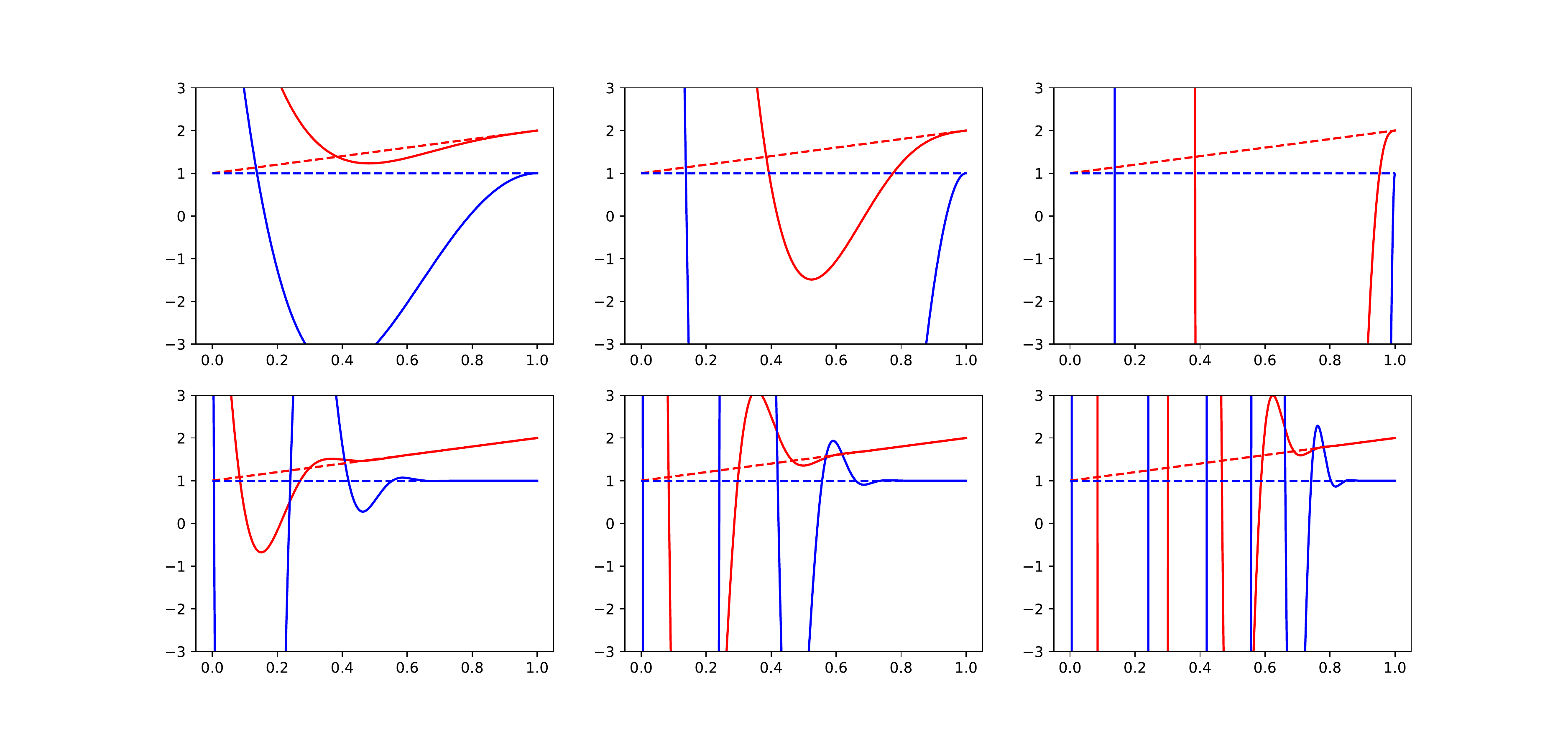}}%
    \put(0,0){\includegraphics[width=\unitlength,page=1]{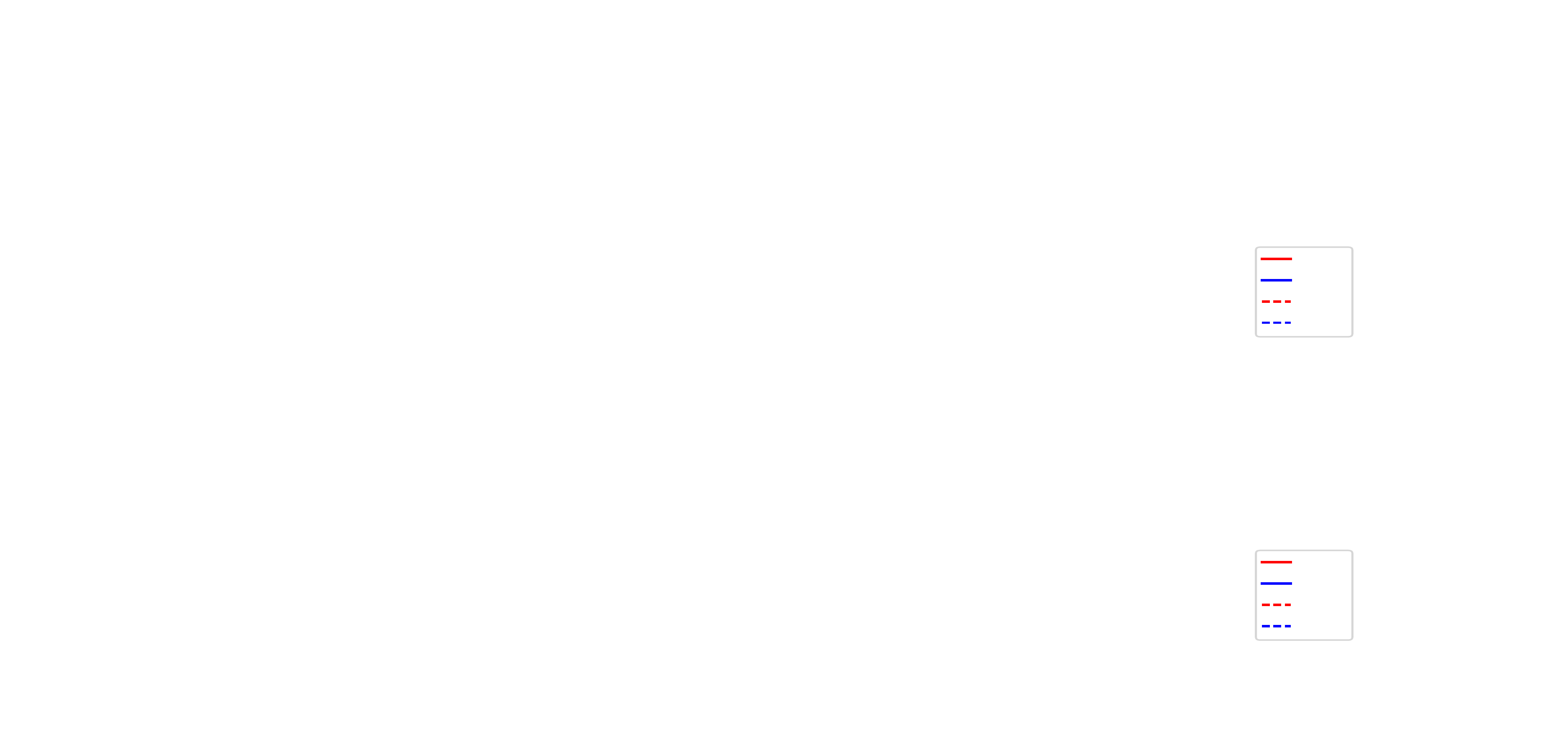}}%
    \footnotesize{
    \put(0.82463322,0.2986452){\color[rgb]{0,0,0}\makebox(0,0)[lt]{\lineheight{1.25}\smash{\begin{tabular}[t]{l}$z_0(t)$\end{tabular}}}}%
    \put(0.82463322,0.2861452){\color[rgb]{0,0,0}\makebox(0,0)[lt]{\lineheight{1.25}\smash{\begin{tabular}[t]{l}$z_1(t)$\end{tabular}}}}%
    \put(0.82463322,0.27225631){\color[rgb]{0,0,0}\makebox(0,0)[lt]{\lineheight{1.25}\smash{\begin{tabular}[t]{l}$y(t)$\end{tabular}}}}%
    \put(0.82463322,0.25836743){\color[rgb]{0,0,0}\makebox(0,0)[lt]{\lineheight{1.25}\smash{\begin{tabular}[t]{l}$\dot{y}(t)$\end{tabular}}}}%
    \put(0.82463322,0.10558963){\color[rgb]{0,0,0}\makebox(0,0)[lt]{\lineheight{1.25}\smash{\begin{tabular}[t]{l}$z_0(t)$\end{tabular}}}}%
    \put(0.82463322,0.09308963){\color[rgb]{0,0,0}\makebox(0,0)[lt]{\lineheight{1.25}\smash{\begin{tabular}[t]{l}$z_1(t)$\end{tabular}}}}%
    \put(0.82463322,0.07920074){\color[rgb]{0,0,0}\makebox(0,0)[lt]{\lineheight{1.25}\smash{\begin{tabular}[t]{l}$y(t)$\end{tabular}}}}%
    \put(0.82463322,0.06531186){\color[rgb]{0,0,0}\makebox(0,0)[lt]{\lineheight{1.25}\smash{\begin{tabular}[t]{l}$\dot{y}(t)$\end{tabular}}}}%
    \put(0.10154998,0.12145524){\color[rgb]{0,0,0}\rotatebox{90}{\makebox(0,0)[lt]{\lineheight{1.25}\smash{\begin{tabular}[t]{l}Ours\end{tabular}}}}}%
    \put(0.10154998,0.31589969){\color[rgb]{0,0,0}\rotatebox{90}{\makebox(0,0)[lt]{\lineheight{1.25}\smash{\begin{tabular}[t]{l}Holloway et al.~\cite{Holloway2019}\end{tabular}}}}}%
    \put(0.15908303,0.41515092){\color[rgb]{0,0,0}\makebox(0,0)[lt]{\lineheight{1.25}\smash{\begin{tabular}[t]{l}$z_0(t_0)=z_0(t_0)=1e1$\end{tabular}}}}%
    \put(0.44380527,0.41515092){\color[rgb]{0,0,0}\makebox(0,0)[lt]{\lineheight{1.25}\smash{\begin{tabular}[t]{l}$z_0(t_0)=z_0(t_0)=1e2$\end{tabular}}}}%
    \put(0.71741638,0.41515092){\color[rgb]{0,0,0}\makebox(0,0)[lt]{\lineheight{1.25}\smash{\begin{tabular}[t]{l}$z_0(t_0)=z_0(t_0)=1e4$\end{tabular}}}}%
    \put(0.23,0.02){\color[rgb]{0,0,0}\makebox(0,0)[lt]{\lineheight{1.25}\smash{\begin{tabular}[t]{l}time\end{tabular}}}}%
    \put(0.5,0.02){\color[rgb]{0,0,0}\makebox(0,0)[lt]{\lineheight{1.25}\smash{\begin{tabular}[t]{l}time\end{tabular}}}}%
    \put(0.78,0.02){\color[rgb]{0,0,0}\makebox(0,0)[lt]{\lineheight{1.25}\smash{\begin{tabular}[t]{l}time\end{tabular}}}}%
    }
  \end{picture}%
\endgroup%
    \caption{Simulation of Example~\ref{Ex:Krstic}, i.e., online differentiation of the signal $y(t)=t+1$.}
    \label{fig:Ex1}
\end{figure*}

\begin{figure}
    \centering
    \def\svgwidth{18cm}
\begingroup%
  \makeatletter%
  \providecommand\color[2][]{%
    \errmessage{(Inkscape) Color is used for the text in Inkscape, but the package 'color.sty' is not loaded}%
    \renewcommand\color[2][]{}%
  }%
  \providecommand\transparent[1]{%
    \errmessage{(Inkscape) Transparency is used (non-zero) for the text in Inkscape, but the package 'transparent.sty' is not loaded}%
    \renewcommand\transparent[1]{}%
  }%
  \providecommand\rotatebox[2]{#2}%
  \newcommand*\fsize{\dimexpr\f@size pt\relax}%
  \newcommand*\lineheight[1]{\fontsize{\fsize}{#1\fsize}\selectfont}%
  \ifx\svgwidth\undefined%
    \setlength{\unitlength}{1080bp}%
    \ifx\svgscale\undefined%
      \relax%
    \else%
      \setlength{\unitlength}{\unitlength * \real{\svgscale}}%
    \fi%
  \else%
    \setlength{\unitlength}{\svgwidth}%
  \fi%
  \global\let\svgwidth\undefined%
  \global\let\svgscale\undefined%
  \makeatother%
  \begin{picture}(1,0.46666667)%
    \lineheight{1}%
    \setlength\tabcolsep{0pt}%
    \put(0,0){\includegraphics[width=\unitlength,page=1]{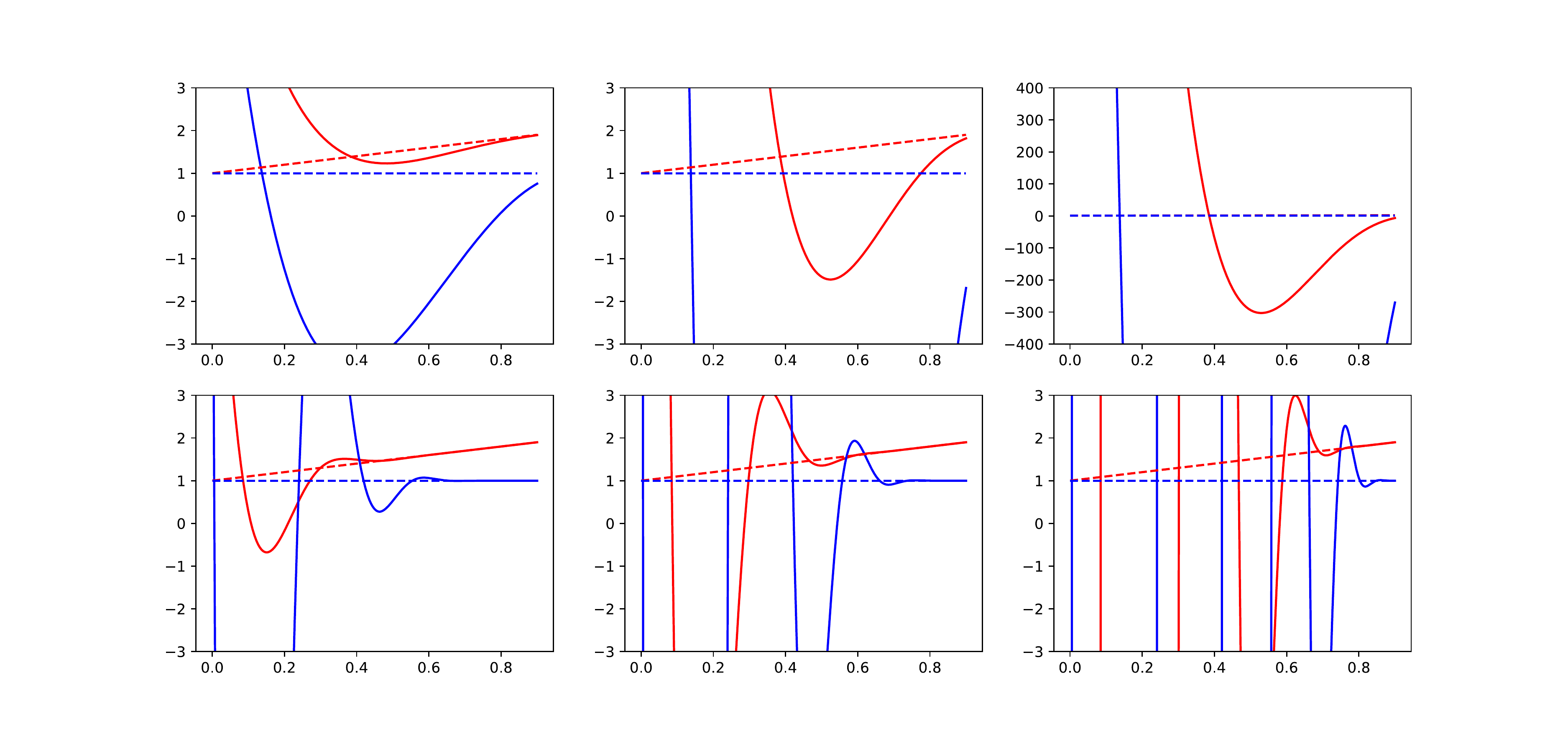}}%
    \put(0,0){\includegraphics[width=\unitlength,page=1]{figures/Legend.pdf}}%
    \footnotesize{
    \put(0.82463322,0.2986452){\color[rgb]{0,0,0}\makebox(0,0)[lt]{\lineheight{1.25}\smash{\begin{tabular}[t]{l}$z_0(t)$\end{tabular}}}}%
    \put(0.82463322,0.2861452){\color[rgb]{0,0,0}\makebox(0,0)[lt]{\lineheight{1.25}\smash{\begin{tabular}[t]{l}$z_1(t)$\end{tabular}}}}%
    \put(0.82463322,0.27225631){\color[rgb]{0,0,0}\makebox(0,0)[lt]{\lineheight{1.25}\smash{\begin{tabular}[t]{l}$y(t)$\end{tabular}}}}%
    \put(0.82463322,0.25836743){\color[rgb]{0,0,0}\makebox(0,0)[lt]{\lineheight{1.25}\smash{\begin{tabular}[t]{l}$\dot{y}(t)$\end{tabular}}}}%
    \put(0.82463322,0.10558963){\color[rgb]{0,0,0}\makebox(0,0)[lt]{\lineheight{1.25}\smash{\begin{tabular}[t]{l}$z_0(t)$\end{tabular}}}}%
    \put(0.82463322,0.09308963){\color[rgb]{0,0,0}\makebox(0,0)[lt]{\lineheight{1.25}\smash{\begin{tabular}[t]{l}$z_1(t)$\end{tabular}}}}%
    \put(0.82463322,0.07920074){\color[rgb]{0,0,0}\makebox(0,0)[lt]{\lineheight{1.25}\smash{\begin{tabular}[t]{l}$y(t)$\end{tabular}}}}%
    \put(0.82463322,0.06531186){\color[rgb]{0,0,0}\makebox(0,0)[lt]{\lineheight{1.25}\smash{\begin{tabular}[t]{l}$\dot{y}(t)$\end{tabular}}}}%
    \put(0.10154998,0.12145524){\color[rgb]{0,0,0}\rotatebox{90}{\makebox(0,0)[lt]{\lineheight{1.25}\smash{\begin{tabular}[t]{l}Ours\end{tabular}}}}}%
    \put(0.10154998,0.31589969){\color[rgb]{0,0,0}\rotatebox{90}{\makebox(0,0)[lt]{\lineheight{1.25}\smash{\begin{tabular}[t]{l}Holloway et al.~\cite{Holloway2019}\end{tabular}}}}}%
    \put(0.15908303,0.41515092){\color[rgb]{0,0,0}\makebox(0,0)[lt]{\lineheight{1.25}\smash{\begin{tabular}[t]{l}$z_0(t_0)=z_0(t_0)=1e1$\end{tabular}}}}%
    \put(0.44380527,0.41515092){\color[rgb]{0,0,0}\makebox(0,0)[lt]{\lineheight{1.25}\smash{\begin{tabular}[t]{l}$z_0(t_0)=z_0(t_0)=1e2$\end{tabular}}}}%
    \put(0.71741638,0.41515092){\color[rgb]{0,0,0}\makebox(0,0)[lt]{\lineheight{1.25}\smash{\begin{tabular}[t]{l}$z_0(t_0)=z_0(t_0)=1e4$\end{tabular}}}}%
    \put(0.23,0.02){\color[rgb]{0,0,0}\makebox(0,0)[lt]{\lineheight{1.25}\smash{\begin{tabular}[t]{l}time\end{tabular}}}}%
    \put(0.5,0.02){\color[rgb]{0,0,0}\makebox(0,0)[lt]{\lineheight{1.25}\smash{\begin{tabular}[t]{l}time\end{tabular}}}}%
    \put(0.78,0.02){\color[rgb]{0,0,0}\makebox(0,0)[lt]{\lineheight{1.25}\smash{\begin{tabular}[t]{l}time\end{tabular}}}}%
    }
  \end{picture}%
\endgroup%
    \caption{Simulation of Example~\ref{Ex:Krstic}, i.e., online differentiation of the signal $y(t)=t+1$. Bounding the \textit{TBG} gain for practical scenarios. For illustration purposes the \textit{TBG} gain is maintained below $\kappa(t-t_0)\leq10$, using the workaround suggested in Remark~\ref{Rem:BoundedGains}.}
    \label{fig:Ex1Bounded}
\end{figure}
\end{example}

Notice that the algorithm under comparison~\cite{Holloway2019} cannot be applied for the first-order exact differentiator problem of sine functions, which is our next example. Compared with autonomous predefined-time first-order differentiators~\cite{Cruz-Zavala2011,Seeber2020ExactBound}, whose predefined \textit{UBST} is conservative, here we show that the slack in our predefined \textit{UBST} is significantly reduced.

\begin{example}
\label{Ex:Sine}
Let $y(t)=\sin(t)$. Thus, $|\ddot{y}(t)|\leq 1$. Notice that $L(t-t_0)=1$. For comparison, consider the algorithm in~\cite{Seeber2020ExactBound}, i.e., algorithm~\eqref{Eq:Diff} with $h_i(w,t;t_c)=k_i\nu_i(w)$, $i=1,2$, where
\begin{align}
    \nu_1(w)&=\sgn{w}^{\frac{1}{2}}+k_3^2\sgn{w}^{\frac{3}{2}}\\
    \nu_2(w)&=\sgn{w}^{0}+4k_3^2w+k_3^4\sgn{w}^{2}, \label{Eq:Seeber}
\end{align}
where $k_1=4\sqrt{L}$, $k_2=2L$ and $k_3=\frac{9.8}{T_c\sqrt{L}}$. The convergence of~\eqref{Eq:Seeber}, under different initial conditions is shown in the first row of Figure~\ref{fig:Ex2States}. For our simulation we take $\alpha=0.3$ and $M=1$. Notice that $\mathcal{M}$ should satisfy $\mathcal{M}\geq 2\alpha$. Thus, we take $\mathcal{M}=1$. The convergence of our algorithm under different initial conditions is shown in the second row of Figure~\ref{fig:Ex2States}. The convergence of the error signals is shown in Figure~\ref{fig:Ex2Errors}. Notice in the first two rows of Figure~\ref{fig:Ex2Errors}, that the convergence of the algorithm~\eqref{Eq:Seeber} occurs before $0.5s$, but the predefined one is $T_c$. Thus it has a slack of $0.5s$. As can be seen, in the last two rows of Figure~\ref{fig:Ex2Errors}, such slack is significantly reduced in our algorithm (in fact, it can be made arbitrarily small). An advantage of reducing such slack is that the maximum differentiation error is significantly reduced (in this simulation, our maximum differentiation error results in several orders of magnitude lower), an important feature when the differentiator is in closed loop with a controller.

\begin{figure}
    \centering
    \def\svgwidth{18cm}
\begingroup%
  \makeatletter%
  \providecommand\color[2][]{%
    \errmessage{(Inkscape) Color is used for the text in Inkscape, but the package 'color.sty' is not loaded}%
    \renewcommand\color[2][]{}%
  }%
  \providecommand\transparent[1]{%
    \errmessage{(Inkscape) Transparency is used (non-zero) for the text in Inkscape, but the package 'transparent.sty' is not loaded}%
    \renewcommand\transparent[1]{}%
  }%
  \providecommand\rotatebox[2]{#2}%
  \newcommand*\fsize{\dimexpr\f@size pt\relax}%
  \newcommand*\lineheight[1]{\fontsize{\fsize}{#1\fsize}\selectfont}%
  \ifx\svgwidth\undefined%
    \setlength{\unitlength}{1080bp}%
    \ifx\svgscale\undefined%
      \relax%
    \else%
      \setlength{\unitlength}{\unitlength * \real{\svgscale}}%
    \fi%
  \else%
    \setlength{\unitlength}{\svgwidth}%
  \fi%
  \global\let\svgwidth\undefined%
  \global\let\svgscale\undefined%
  \makeatother%
  \begin{picture}(1,0.46666667)%
    \lineheight{1}%
    \setlength\tabcolsep{0pt}%
    \put(0,0){\includegraphics[width=\unitlength,page=1]{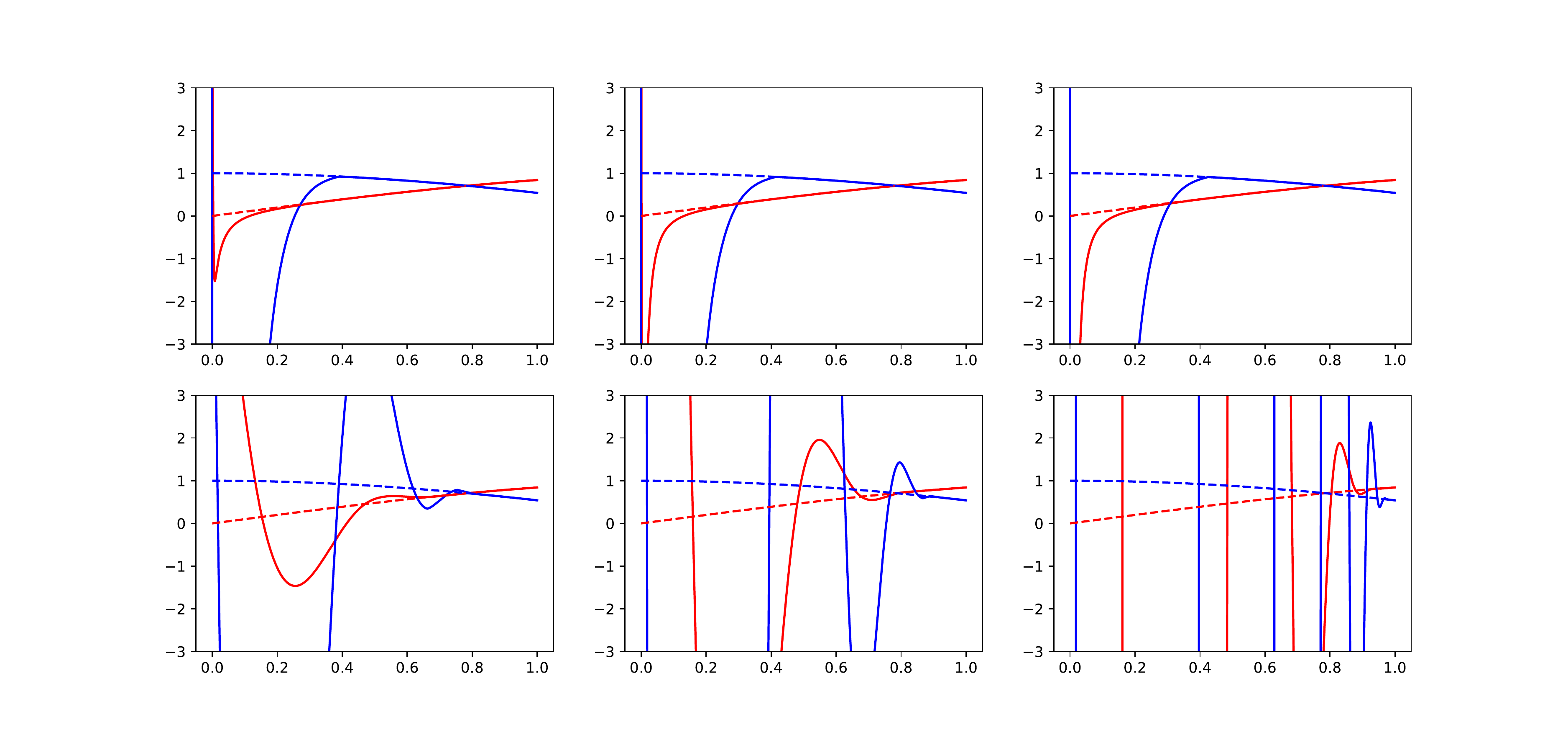}}%
    \put(0,0){\includegraphics[width=\unitlength,page=1]{figures/Legend.pdf}}%
    \footnotesize{
    \put(0.82463322,0.2986452){\color[rgb]{0,0,0}\makebox(0,0)[lt]{\lineheight{1.25}\smash{\begin{tabular}[t]{l}$z_0(t)$\end{tabular}}}}%
    \put(0.82463322,0.2861452){\color[rgb]{0,0,0}\makebox(0,0)[lt]{\lineheight{1.25}\smash{\begin{tabular}[t]{l}$z_1(t)$\end{tabular}}}}%
    \put(0.82463322,0.27225631){\color[rgb]{0,0,0}\makebox(0,0)[lt]{\lineheight{1.25}\smash{\begin{tabular}[t]{l}$y(t)$\end{tabular}}}}%
    \put(0.82463322,0.25836743){\color[rgb]{0,0,0}\makebox(0,0)[lt]{\lineheight{1.25}\smash{\begin{tabular}[t]{l}$\dot{y}(t)$\end{tabular}}}}%
    \put(0.82463322,0.10558963){\color[rgb]{0,0,0}\makebox(0,0)[lt]{\lineheight{1.25}\smash{\begin{tabular}[t]{l}$z_0(t)$\end{tabular}}}}%
    \put(0.82463322,0.09308963){\color[rgb]{0,0,0}\makebox(0,0)[lt]{\lineheight{1.25}\smash{\begin{tabular}[t]{l}$z_1(t)$\end{tabular}}}}%
    \put(0.82463322,0.07920074){\color[rgb]{0,0,0}\makebox(0,0)[lt]{\lineheight{1.25}\smash{\begin{tabular}[t]{l}$y(t)$\end{tabular}}}}%
    \put(0.82463322,0.06531186){\color[rgb]{0,0,0}\makebox(0,0)[lt]{\lineheight{1.25}\smash{\begin{tabular}[t]{l}$\dot{y}(t)$\end{tabular}}}}%
    \put(0.10154998,0.12145524){\color[rgb]{0,0,0}\rotatebox{90}{\makebox(0,0)[lt]{\lineheight{1.25}\smash{\begin{tabular}[t]{l}Ours\end{tabular}}}}}%
    \put(0.10154998,0.31589969){\color[rgb]{0,0,0}\rotatebox{90}{\makebox(0,0)[lt]{\lineheight{1.25}\smash{\begin{tabular}[t]{l}Seeber et al.~\cite{Seeber2020ExactBound}\end{tabular}}}}}%
    \put(0.15908303,0.41515092){\color[rgb]{0,0,0}\makebox(0,0)[lt]{\lineheight{1.25}\smash{\begin{tabular}[t]{l}$z_0(t_0)=z_0(t_0)=1e1$\end{tabular}}}}%
    \put(0.44380527,0.41515092){\color[rgb]{0,0,0}\makebox(0,0)[lt]{\lineheight{1.25}\smash{\begin{tabular}[t]{l}$z_0(t_0)=z_0(t_0)=1e2$\end{tabular}}}}%
    \put(0.71741638,0.41515092){\color[rgb]{0,0,0}\makebox(0,0)[lt]{\lineheight{1.25}\smash{\begin{tabular}[t]{l}$z_0(t_0)=z_0(t_0)=1e4$\end{tabular}}}}%
    \put(0.23,0.02){\color[rgb]{0,0,0}\makebox(0,0)[lt]{\lineheight{1.25}\smash{\begin{tabular}[t]{l}time\end{tabular}}}}%
    \put(0.5,0.02){\color[rgb]{0,0,0}\makebox(0,0)[lt]{\lineheight{1.25}\smash{\begin{tabular}[t]{l}time\end{tabular}}}}%
    \put(0.78,0.02){\color[rgb]{0,0,0}\makebox(0,0)[lt]{\lineheight{1.25}\smash{\begin{tabular}[t]{l}time\end{tabular}}}}%
    }
  \end{picture}%
\endgroup%
    \caption{Simulation of Example~\ref{Ex:Krstic}, i.e., online differentiation of the signal $y(t)=\sin(t)$.}
    \label{fig:Ex2States}
\end{figure}

\begin{figure*}
    \centering
    \def\svgwidth{18cm}
\begingroup%
  \makeatletter%
  \providecommand\color[2][]{%
    \errmessage{(Inkscape) Color is used for the text in Inkscape, but the package 'color.sty' is not loaded}%
    \renewcommand\color[2][]{}%
  }%
  \providecommand\transparent[1]{%
    \errmessage{(Inkscape) Transparency is used (non-zero) for the text in Inkscape, but the package 'transparent.sty' is not loaded}%
    \renewcommand\transparent[1]{}%
  }%
  \providecommand\rotatebox[2]{#2}%
  \newcommand*\fsize{\dimexpr\f@size pt\relax}%
  \newcommand*\lineheight[1]{\fontsize{\fsize}{#1\fsize}\selectfont}%
  \ifx\svgwidth\undefined%
    \setlength{\unitlength}{1080bp}%
    \ifx\svgscale\undefined%
      \relax%
    \else%
      \setlength{\unitlength}{\unitlength * \real{\svgscale}}%
    \fi%
  \else%
    \setlength{\unitlength}{\svgwidth}%
  \fi%
  \global\let\svgwidth\undefined%
  \global\let\svgscale\undefined%
  \makeatother%
  \begin{picture}(1,0.93333333)%
    \lineheight{1}%
    \setlength\tabcolsep{0pt}%
    \put(0,0){\includegraphics[width=\unitlength,page=1]{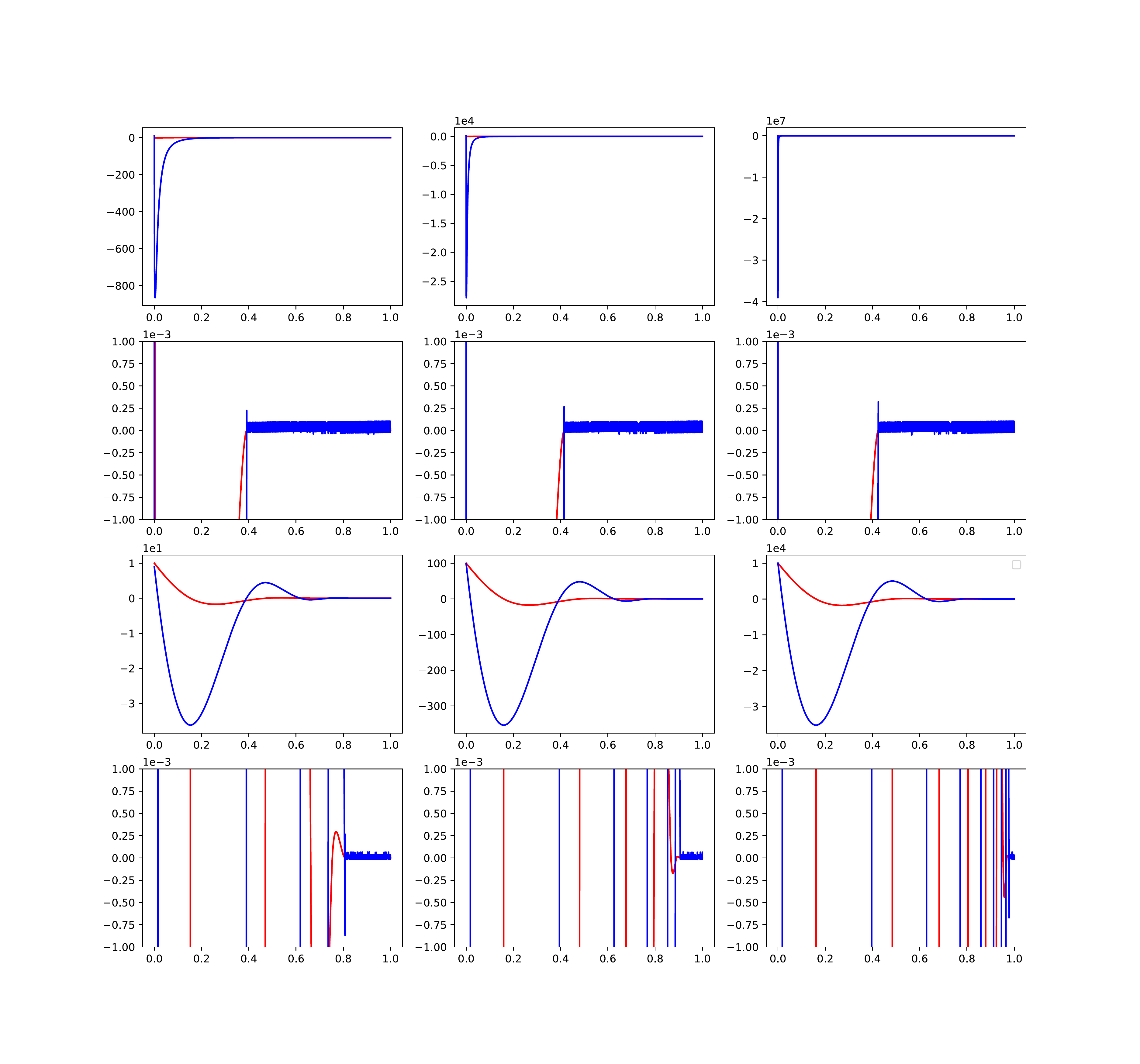}}%
    \put(0,0){\includegraphics[width=\unitlength,page=1]{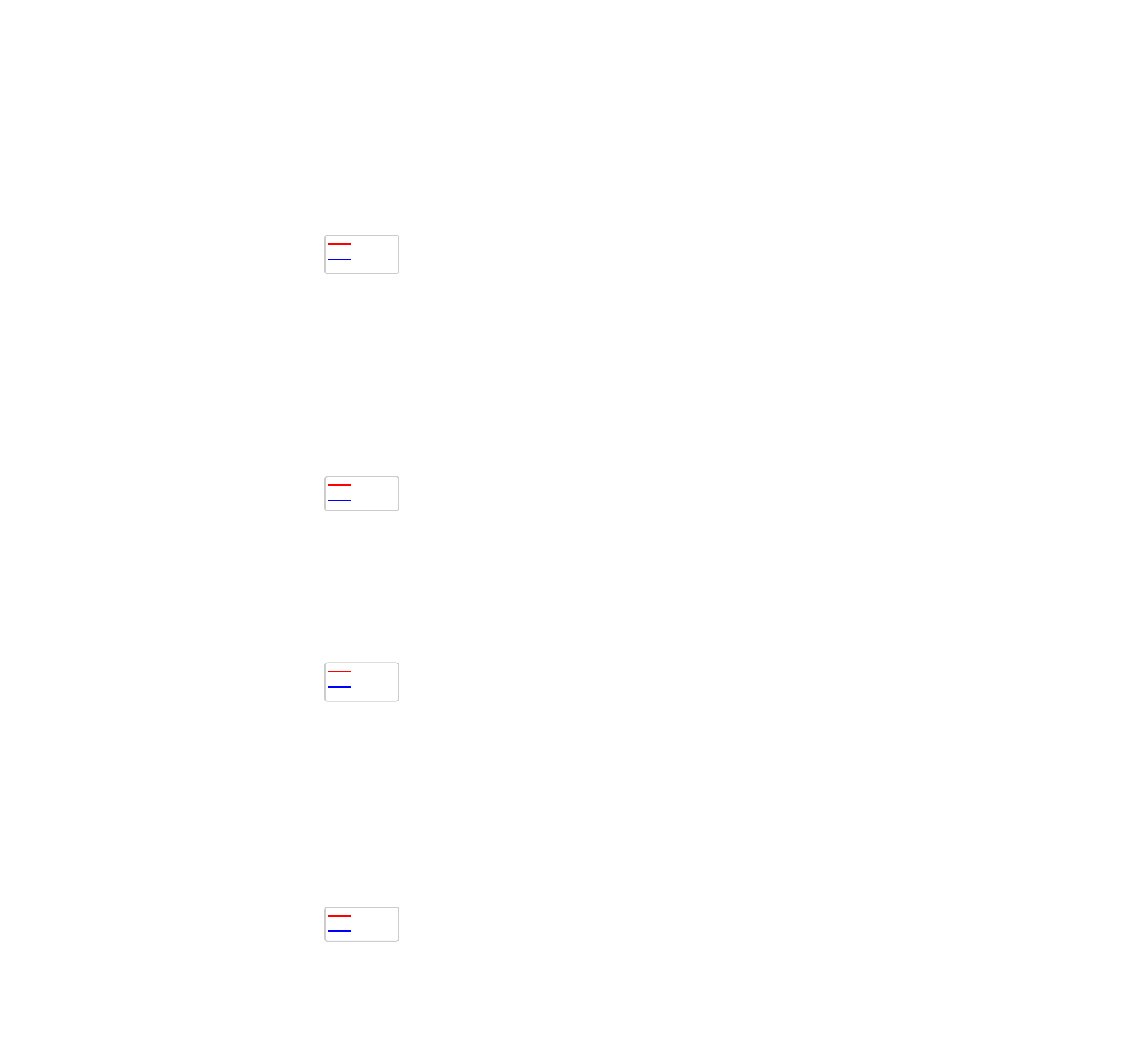}}%
    \footnotesize{
    \put(0.0831274,0.54481778){\color[rgb]{0,0,0}\rotatebox{90}{\makebox(0,0)[lt]{\lineheight{1.25}\smash{\begin{tabular}[t]{l}Seeber et al.~\cite{Seeber2020ExactBound}\end{tabular}}}}}%
    \put(0.0831274,0.73092889){\color[rgb]{0,0,0}\rotatebox{90}{\makebox(0,0)[lt]{\lineheight{1.25}\smash{\begin{tabular}[t]{l}Seeber et al.~\cite{Seeber2020ExactBound}\end{tabular}}}}}%
    \put(0.16948773,0.82694675){\color[rgb]{0,0,0}\makebox(0,0)[lt]{\lineheight{1.25}\smash{\begin{tabular}[t]{l}$z(t_0)=z(t_0)=1e1$\end{tabular}}}}%
    \put(0.44726551,0.82694675){\color[rgb]{0,0,0}\makebox(0,0)[lt]{\lineheight{1.25}\smash{\begin{tabular}[t]{l}$z(t_0)=z(t_0)=1e2$\end{tabular}}}}%
    \put(0.72504329,0.82694675){\color[rgb]{0,0,0}\makebox(0,0)[lt]{\lineheight{1.25}\smash{\begin{tabular}[t]{l}$z(t_0)=z(t_0)=1e4$\end{tabular}}}}%
    \put(0.0831274,0.16704){\color[rgb]{0,0,0}\rotatebox{90}{\makebox(0,0)[lt]{\lineheight{1.25}\smash{\begin{tabular}[t]{l}Ours\end{tabular}}}}}%
    \put(0.0831274,0.35731778){\color[rgb]{0,0,0}\rotatebox{90}{\makebox(0,0)[lt]{\lineheight{1.25}\smash{\begin{tabular}[t]{l}Ours\end{tabular}}}}}%
    \put(0.31125873,0.71611215){\color[rgb]{0,0,0}\makebox(0,0)[lt]{\lineheight{1.25}\smash{\begin{tabular}[t]{l}$e_1(t)$\end{tabular}}}}%
    \put(0.31125873,0.70361215){\color[rgb]{0,0,0}\makebox(0,0)[lt]{\lineheight{1.25}\smash{\begin{tabular}[t]{l}$e_2(t)$\end{tabular}}}}%
    \put(0.31125873,0.34111215){\color[rgb]{0,0,0}\makebox(0,0)[lt]{\lineheight{1.25}\smash{\begin{tabular}[t]{l}$e_1(t)$\end{tabular}}}}%
    \put(0.31125873,0.32861215){\color[rgb]{0,0,0}\makebox(0,0)[lt]{\lineheight{1.25}\smash{\begin{tabular}[t]{l}$e_2(t)$\end{tabular}}}}%
    \put(0.31264761,0.50395355){\color[rgb]{0,0,0}\makebox(0,0)[lt]{\lineheight{1.25}\smash{\begin{tabular}[t]{l}$e_1(t)$\end{tabular}}}}%
    \put(0.31264761,0.49284244){\color[rgb]{0,0,0}\makebox(0,0)[lt]{\lineheight{1.25}\smash{\begin{tabular}[t]{l}$e_2(t)$\end{tabular}}}}%
    \put(0.31264761,0.12756466){\color[rgb]{0,0,0}\makebox(0,0)[lt]{\lineheight{1.25}\smash{\begin{tabular}[t]{l}$e_1(t)$\end{tabular}}}}%
    \put(0.31264761,0.11645355){\color[rgb]{0,0,0}\makebox(0,0)[lt]{\lineheight{1.25}\smash{\begin{tabular}[t]{l}$e_2(t)$\end{tabular}}}}%
    \put(0.23,0.07){\color[rgb]{0,0,0}\makebox(0,0)[lt]{\lineheight{1.25}\smash{\begin{tabular}[t]{l}time\end{tabular}}}}%
    \put(0.5,0.07){\color[rgb]{0,0,0}\makebox(0,0)[lt]{\lineheight{1.25}\smash{\begin{tabular}[t]{l}time\end{tabular}}}}%
    \put(0.78,0.07){\color[rgb]{0,0,0}\makebox(0,0)[lt]{\lineheight{1.25}\smash{\begin{tabular}[t]{l}time\end{tabular}}}}%
    }
  \end{picture}%
\endgroup%
    \caption{Simulation of Example~\ref{Ex:Sine}, i.e., online differentiation of the signal $y=\sin(t)$.}
    \label{fig:Ex2Errors}
\end{figure*}
\end{example}

Notice that the algorithms in~\cite{Cruz-Zavala2011,Seeber2020ExactBound} cannot be applied for the first-order exact differentiator problem of functions where the second derivative is not bounded by a constant, such as $y(t)=2\sin\left(\frac{1}{2} t^2 \right)$. Compared with the predefined-time first-order differentiators based on time-varying gains~\cite{Levant2018GloballyGains}, our approach provides predefined-time convergence, which is shown in the following example.

\begin{example}
\label{Ex:SineSquare}
Let $y(t)=2\sin\left(\frac{1}{2} t^2\right)$. It is easy to verify that such function satisfies $$|\ddot{y}(t)|=\left|2\cos\left(\frac{1}{2}t^2\right)-2t^2\sin\left(\frac{1}{2}t^2\right)\right|\leq L(t-t_0)$$ with $L(t-t_0)=2(t-t_0)^2+\beta$, $\beta\geq 2$ and that $\frac{1}{L(t-t_0)}\left|\frac{d {L}(t-t_0)}{dt}\right|\leq M=\sqrt{\frac{2}{\beta}}$. Notice that, $$\mathcal{L}(\tau)=(2(T_c(1-\exp{-\alpha\tau}))^2+\beta)(\alpha T_c \exp{-\alpha \tau})^2$$
and
\begin{align}
\frac{1}{\mathcal{L}(\tau)}\left|\frac{d \mathcal{L}(\tau)}{d\tau}\right|=&4\alpha T_c^2\frac{\exp{-\alpha\tau}(1-\exp{-\alpha\tau})}{\beta+2T_c^2(1-\exp{-\alpha\tau})^2}+2\alpha.
\end{align}

It can be verified that the maximum of $\frac{1}{\mathcal{L}(\tau)}\left|\frac{d \mathcal{L}(\tau)}{d\tau}\right|$ is $\frac{2\alpha}{(-\beta + \sqrt{\beta (2 + \beta)}}$, which occurs at $\tau=\log\left(1 + \sqrt{\frac{\beta}{2 + \beta}}\right)$.
Thus, $\mathcal{M}$ should satisfy $\mathcal{M}\geq\frac{2\alpha}{-\beta + \sqrt{\beta (2 + \beta)}}$. 

Consider our algorithm with $\alpha=0.3$, $\beta=2$ and $\mathcal{M}=\frac{0.6}{\sqrt{8}-2}$. The convergence of our algorithm under different initial conditions is shown in Figure~\ref{fig:Ex3}, where it can be verified that the convergence is upper bounded by the desired time given by $T_c$.


\begin{figure*}
    \centering
    \def\svgwidth{18cm}
\begingroup%
  \makeatletter%
  \providecommand\color[2][]{%
    \errmessage{(Inkscape) Color is used for the text in Inkscape, but the package 'color.sty' is not loaded}%
    \renewcommand\color[2][]{}%
  }%
  \providecommand\transparent[1]{%
    \errmessage{(Inkscape) Transparency is used (non-zero) for the text in Inkscape, but the package 'transparent.sty' is not loaded}%
    \renewcommand\transparent[1]{}%
  }%
  \providecommand\rotatebox[2]{#2}%
  \newcommand*\fsize{\dimexpr\f@size pt\relax}%
  \newcommand*\lineheight[1]{\fontsize{\fsize}{#1\fsize}\selectfont}%
  \ifx\svgwidth\undefined%
    \setlength{\unitlength}{1080bp}%
    \ifx\svgscale\undefined%
      \relax%
    \else%
      \setlength{\unitlength}{\unitlength * \real{\svgscale}}%
    \fi%
  \else%
    \setlength{\unitlength}{\svgwidth}%
  \fi%
  \global\let\svgwidth\undefined%
  \global\let\svgscale\undefined%
  \makeatother%
  \begin{picture}(1,0.23333333)%
    \lineheight{1}%
    \setlength\tabcolsep{0pt}%
    \put(0,0){\includegraphics[width=\unitlength,page=1]{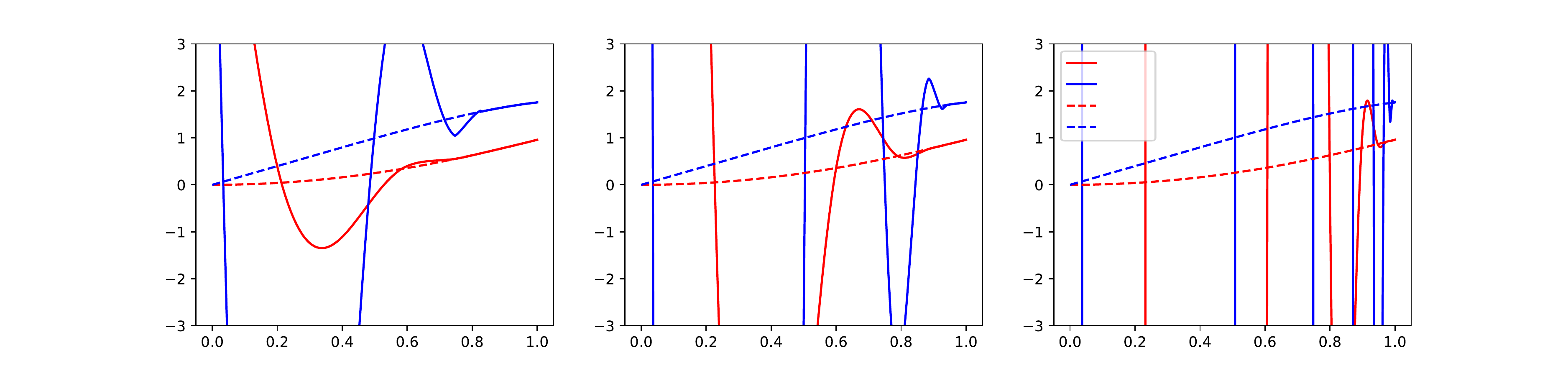}}%
    \footnotesize{
    \put(0.22863079,0.00000075){\color[rgb]{0,0,0}\makebox(0,0)[lt]{\lineheight{1.25}\smash{\begin{tabular}[t]{l}time\end{tabular}}}}%
    \put(0.50216019,0.00000075){\color[rgb]{0,0,0}\makebox(0,0)[lt]{\lineheight{1.25}\smash{\begin{tabular}[t]{l}time\end{tabular}}}}%
    \put(0.77568958,0.00000075){\color[rgb]{0,0,0}\makebox(0,0)[lt]{\lineheight{1.25}\smash{\begin{tabular}[t]{l}time\end{tabular}}}}%
    \put(0.70250459,0.19002878){\color[rgb]{0,0,0}\makebox(0,0)[lt]{\lineheight{1.25}\smash{\begin{tabular}[t]{l}$z_0(t)$\end{tabular}}}}%
    \put(0.70250459,0.17752878){\color[rgb]{0,0,0}\makebox(0,0)[lt]{\lineheight{1.25}\smash{\begin{tabular}[t]{l}$z_1(t)$\end{tabular}}}}%
    \put(0.70250459,0.16363989){\color[rgb]{0,0,0}\makebox(0,0)[lt]{\lineheight{1.25}\smash{\begin{tabular}[t]{l}$y(t)$\end{tabular}}}}%
    \put(0.70250459,0.15113989){\color[rgb]{0,0,0}\makebox(0,0)[lt]{\lineheight{1.25}\smash{\begin{tabular}[t]{l}$\dot{y}(t)$\end{tabular}}}}%
    \put(0.16724252,0.21281503){\color[rgb]{0,0,0}\makebox(0,0)[lt]{\lineheight{1.25}\smash{\begin{tabular}[t]{l}$z_0(t_0)=z_0(t_0)=1e1$\end{tabular}}}}%
    \put(0.43252029,0.21281503){\color[rgb]{0,0,0}\makebox(0,0)[lt]{\lineheight{1.25}\smash{\begin{tabular}[t]{l}$z_0(t_0)=z_0(t_0)=1e2$\end{tabular}}}}%
    \put(0.7227981,0.21281503){\color[rgb]{0,0,0}\makebox(0,0)[lt]{\lineheight{1.25}\smash{\begin{tabular}[t]{l}$z_0(t_0)=z_0(t_0)=1e4$\end{tabular}}}}%
    }
  \end{picture}%
\endgroup%
    \caption{Simulation of Example~\ref{Ex:SineSquare}, i.e., online differentiation of the signal $y=2\sin\left(\frac{1}{2}t^2\right)$}
    \label{fig:Ex3}
\end{figure*}
\end{example}

\section{Conclusion}
\label{Sec:Conclu}

In this paper, we introduced a predefined-time first-order exact differentiator algorithm for the case where the second derivative of a signal is bounded by a known time-varying function. Our approach redesigns the algorithm proposed by Levant and Livne, which is based on time-varying gains, by incorporating a time-varying gain known as \textit{TBG} gain. To our best knowledge, our approach is the first predefined-time first-order differentiator for such class of functions. We presented numerical examples highlighting the contribution with respect to state-of-the-art algorithms.

Our future work is motivated by~\cite{Fraguela2012DesignNoise}, to consider the optimal selection of the parameters in the presence of noise.

\section*{Acknowledge}
The authors would like to thank Dr. Marco Tulio Angulo from UNAM, Juriquilla, Mexico, for his feedback on the organization of the manuscript.

\bibliographystyle{abbrv}
\bibliography{biblio}
\end{document}